\numberwithin{equation}{section}
\newtheorem{prop}{Proposition}
\newtheorem{lemma}[prop]{Lemma}
\newtheorem{theorem}[prop]{Theorem}
\newtheorem{cor}[prop]{Corollary}
\newtheorem{definition}{Definition}
\newcommand{\Ric}{\operatorname{Ric}}
\newcommand{\grad}{\nabla}
\newcommand{\D}{\partial}
\newcommand{\tr}{\operatorname{tr}}
\newcommand{\Id}{\operatorname{Id}}
\newcommand{\Sym}{\operatorname{Sym}}
\begin{document}
	
\title[A steady length function for Ricci flow]{A steady length function for Ricci flows}
\author{Joshua Jordan}

\begin{abstract}
	A fundamental step in the analysis of singularities of Ricci flow was the discovery by Perelman of a monotonic volume quantity which detected shrinking solitons in \cite{Perelman}. A similar quantity was found by Feldman, Ilmanen, and Ni in \cite{FIN} which detected expanding solitons. The current work introduces a modified length functional as a first step towards a steady soliton monotonicity formula. This length functional generates a distance function in the usual way which is shown to satisfy several differential inequalities which saturate precisely on manifolds satisfying a modification of the steady soliton equation.
\end{abstract}

\date{\today}	
\maketitle
	
\section{Introduction}
In his paper \cite{Perelman}, Grigori Perelman defines a length functional which is very similar to the Riemannian energy of a curve except that it has a weighting scalar curvature term and the metric is allowed to evolve along the curve. By making clever use of Hamilton's Harnack-type operators and integration by parts, Perelman shows that a modification of the associated distance function has a particular Hessian estimate which implies that it is a subsolution of the $L^2$-conjugate heat equation along the Ricci flow. In fact, he shows that this modified distance function is a solution in precisely the setting of shrinking gradient solitons. This is precisely the condition necessary for the form $\tau^{-\frac{n}{2}}e^{-l(p,\tau)}dV(g_\tau)$ (where $\tau$ is reversed time and $l$ is a modification of the reduced distance) to have a monotone non-decreasing integral along the flow which is constant precisely in the setting of shrinking gradient solitons. This has useful implications for blow-up arguments. In fact, in \cite{Perelman}, Perelman offers a proof of a weakened ``No local collapsing" theorem using only volume monotonicity, a simple differential inequality of the distance function, and some standard Riemannian geometry.
 
A similar reduced length-type quantity for expanding Ricci solitons was introduced by Feldman, Ilmanen, and Ni in \cite{FIN} by drawing analogies to the work of Huisken in \cite{Huisken}. This reduced length is shown to have its own monotone volume which is constant precisely on expanding solitons. This monotone reduced volume in the expanding setting has proved useful in studying rigidity and asymptotics of solutions to Ricci flow.

In short, monotone volume quantities which detect Ricci solitons can have powerful implications, but the case of steady solitons seems largely undeveloped. As such, the author proposes a reduced length functional in this direction.
\begin{definition}
	Given a solution to Ricci flow $(M,g_\tau)$ with maximal existence time $\tau^*$ and two points $(p,s),(q,t)\in M\times [0,\tau^*)$, we define a functional on the space of paths $\gamma$ from $p$ to $q$ in $M$ satisfying $\gamma(s)=p$ and $\gamma(t)=q$
	$$\mathcal{L}(\gamma:(p,s)\to (q,t)) = \int_s^{t} R_{g_{\bar{\tau}}}+\left|\frac{d \gamma}{d\bar{\tau}}\right|^2_{g_{\bar{\tau}}} d\bar{\tau}.$$
\end{definition}
This quantity is very similar to the Perelman and Feldman-Ilmanen-Ni length functionals, and so is a natural choice for a steady length functional. However, this functional is unique in its dependence on both endpoints of the given curve. This will distinctly change the analysis, as the Hessian estimate in \cite{Perelman} relies on having one fixed endpoint.

The associated two-point distance function is then defined as follows.
\begin{definition} \label{DistanceFunction}
	Along a solution to Ricci flow $(M,g_\tau)$ with $\tau^*$ as above, we can define a function $L:(M\times[0,\tau^*)) \times (M \times [0,\tau^*)) \to \mathbb{R}$ by
	$$L((p,s),(q,t)) = \inf\{\mathcal{L}(\gamma:(p,s)\to(q,t))| \gamma\in C^1\}.$$
\end{definition}

This reduced distance satisfies two interesting global differential inequalities which are similar in form to inequalities satisfied by both the Perelman and Feldman-Ilmanen-Ni lengths. However, due to the two-point dependence, the condition for these inequalities on $L$ to saturate becomes a modified version of the usual steady gradient soliton equation. To be more precise, consider the following definition.
\begin{definition}
	$(M,g)$ will be called a twisted gradient soliton provided the following holds for every $(p,s),(q,t)\in M\times [0,\tau^*)$ which may be joined uniquely by a minimizing $\mathcal{L}$-geodesic $\gamma:(p,s)\mapsto (q,t)$.
	$$\grad_{M\times M}^2L\big|_{((p,s),(q,t))}\circ(\Id \oplus \parallel_\gamma)+ \Ric|_{(p,s)}\oplus (-\Ric|_{(q,t)})=0$$
\end{definition} 
This definition in hand, we can now prove the following theorems.
\begin{theorem} \label{MainTheorem}
	Given the $L$ distance defined above on the spacetime of a solution to Ricci flow, we have the following global inequalities in the barrier sense.
	\begin{equation}\label{LEvol}
	\frac{\D}{\D s}L((p,s),(q,t))+\frac{\D}{\D t}L((p,s),(q,t))-2\Box_{M\times M}L((p,s),(q,t))\geq 0
	\end{equation}
	
	\begin{equation}\label{LEllip}
	2\Box_{M\times M} L((p,s),(q,t)) +|\grad_p L|^2((p,s),(q,t))-|\grad_q L|^2((p,s),(q,t))+R_{g_t}(q)-R_{g_s}(p)\leq 0.
	\end{equation}
	Equality is obtained precisely on twisted gradient solitons.
\end{theorem}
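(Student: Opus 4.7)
My plan is to follow Perelman's template for a Hessian-style estimate on a reduced distance, adapted to the two-point and unweighted setting of $\mathcal{L}$. Throughout I would work in the barrier sense, as is standard for reduced-distance arguments: assume a unique minimizing $\mathcal{L}$-geodesic $\gamma\colon[s,t]\to M$ from $(p,s)$ to $(q,t)$ exists and that $L$ is smooth there, derive pointwise identities and inequalities, and then globalize by majorizing the infimum $L$ by the value of $\mathcal{L}$ on a fixed smooth reference variation.

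The first step is the first variation of $\mathcal{L}$, carried out allowing the path $\gamma$, its endpoints $p,q$, and the times $s,t$ to vary independently. This gives the $\mathcal{L}$-geodesic equation
$$\grad_{\gamma'}\gamma' = \tfrac{1}{2}\grad R + 2\Ric(\gamma',\cdot)^\sharp$$
(with the sign of the Ricci term dictated by the Ricci flow convention), the endpoint gradients $\grad_p L = -2\gamma'(s)$ and $\grad_q L = 2\gamma'(t)$, and, after differentiating the endpoint constraints $\gamma(s)=p$, $\gamma(t)=q$ in $s$ and $t$, two Hamilton--Jacobi-type identities
$$\D_s L = -R(p,s) + |\gamma'(s)|^2, \qquad \D_t L = R(q,t) - |\gamma'(t)|^2.$$
These express $\D_sL + \D_tL$ in closed form and reduce both (\ref{LEvol}) and (\ref{LEllip}) to Laplacian-type estimates on an appropriate trace of $\grad^2_{M\times M}L$.

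To obtain such estimates I would take a parallel-transport variation. Given $X\in T_pM$, let $\tilde X(\bar\tau)$ be the $g_{\bar\tau}$-parallel transport of $X$ along $\gamma$, so that $V = \tilde X$ moves both endpoints simultaneously and realizes the product-manifold direction $(X,\parallel_\gamma X) \in T_{(p,s)}M\oplus T_{(q,t)}M$. The second variation of $\mathcal{L}$ then provides the bound
$$\grad^2_{M\times M} L\bigl((X,\parallel_\gamma X),(X,\parallel_\gamma X)\bigr) \le \mathrm{Ind}(\tilde X,\tilde X),$$
where $\mathrm{Ind}$ is the index form of $\mathcal{L}$ along $\gamma$. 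Computing $\mathrm{Ind}(\tilde X,\tilde X)$ using $\grad_{\gamma'}\tilde X = 0$, the Ricci flow evolution equations for $g$ and $R$, the contracted second Bianchi identity in its evolving-metric form, and the $\mathcal{L}$-geodesic equation, I would aim to collect the integrand into a total $\bar\tau$-derivative. Integrating then contributes boundary terms $-\Ric(X,X)|_{(p,s)} + \Ric(\parallel_\gamma X,\parallel_\gamma X)|_{(q,t)}$ together with terms matching $\grad_p L$ and $\grad_q L$. Summing over a parallel-transported orthonormal frame converts this into the desired trace inequality; pairing it with the Hamilton--Jacobi identities above yields (\ref{LEvol}) and (\ref{LEllip}).

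Equality in the second-variation estimate forces $\tilde X$ to be an $\mathcal{L}$-Jacobi field for every initial direction $X$, and tracing this condition across the frame recovers precisely the twisted gradient soliton equation. The main obstacle in executing this plan will be bookkeeping during the second variation: Perelman's cutoff variation vanishes at his fixed endpoint, whereas ours acts at both endpoints simultaneously, so each term in the index form must be correctly assigned to $\Hess{_pL}$, $\Hess{_qL}$, or the mixed $\D_p\D_q L$ piece of the product Hessian, and the Harnack and Bianchi simplifications must be arranged to respect this decomposition. Cut-locus and barrier-sense issues are expected to pose only the standard technical difficulties and can be handled as in \cite{Perelman}.
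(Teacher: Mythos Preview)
Your overall architecture matches the paper's, and your first-variation/Hamilton--Jacobi identities are exactly right. The genuine gap is in the second-variation step: you propose using \emph{ordinary} parallel transport, $\grad_{\gamma'}\tilde X=0$, as the test fields, and you interpret $\parallel_\gamma$ accordingly. In this paper $\parallel_\gamma$ is \emph{not} Levi--Civita parallel transport. It is defined as the flow of the Ricci-modified ODE
\[
\grad_T V=\Ric(V)^\sharp,
\]
and the operator $\Box_{M\times M}$ traces the Hessian precisely over the diagonal $\{e_i\oplus\parallel_\gamma e_i\}$ produced by this ODE. Under the time-dependent metric satisfying $\partial_\tau g=-2\Ric$, it is this modified transport---not ordinary parallel transport---that preserves inner products along $\gamma$:
\[
\frac{d}{d\tau}\langle V,W\rangle=\langle\grad_T V,W\rangle+\langle V,\grad_T W\rangle-2\Ric(V,W)=0.
\]
With $\grad_T\tilde X=0$ your frame would fail to stay orthonormal at $q$, so ``summing over a parallel-transported orthonormal frame'' would not compute $\Box_{M\times M}L$, and your test fields would not even point in the directions $(X,\parallel_\gamma X)$ that the theorem is about.

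The same choice is what makes the algebra close up. Plugging $\grad_T V=\Ric(V)^\sharp$ into the index form gives $|\grad_T V|^2=|\Ric(V)|^2$ and $\Ric(\grad_T V,V)=|\Ric(V)|^2$, after which $Q(V,V)$ rewrites as $2\Ric(V,V)\big|_s^t-\int_s^t H(T,V)$ for a Harnack-type expression $H$, and the traced identity $\sum_i H(T,e_i)=\tfrac{d}{d\tau}(R+|T|^2)$ is exactly the total-derivative collapse you were hoping for. With $\grad_T\tilde X=0$ these cancellations do not occur and the integrand does not organize into a pure boundary term. So the fix is a single but essential change: replace ordinary parallel transport by the Ricci-modified transport throughout the second-variation argument; everything else in your plan (including the equality case yielding the twisted soliton equation, and the barrier-sense globalization) then goes through as you outlined.
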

The operator $\Box_{M\times M}$ will be defined later, but it is a partial trace of the $M\times M$-Hessian.

A neat corollary of this theorem is the following monotonicity result.
\begin{cor}\label{Monot}
	Let $L$ be the distance function from above, then for $A>0$, the quantity $\inf_{(M\times \{t\})\times (M\times \{t+A\})} L$ is non-decreasing and constant on twisted gradient solitons.
\end{cor}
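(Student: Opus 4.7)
The plan is to define $f(t) := \inf_{(p,q)\in M\times M} L((p,t),(q,t+A))$ and establish monotonicity via a barrier argument driven by inequality \eqref{LEvol}. The key observation is that at any pair $(p^*, q^*)$ realizing the infimum of $L$ over a fixed time-pair slice, the $M\times M$-gradient of $L$ vanishes and the $M\times M$-Hessian is positive semi-definite; since $\Box_{M\times M}$ is a partial trace of that Hessian, this forces $\Box_{M\times M}L \geq 0$ at the minimizer. Combining with \eqref{LEvol} yields $\partial_s L + \partial_t L \geq 0$ at such points.

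To turn this into monotonicity of $f$, I would fix $t_0$ and, for small $\epsilon > 0$, pick $(p_\epsilon, q_\epsilon) \in M \times M$ that (nearly) achieves $f(t_0+\epsilon)$; existence of such (near-)minimizers requires continuity/properness of $L$ in the spatial slots. The trivial comparison $f(t_0) \leq L((p_\epsilon, t_0),(q_\epsilon, t_0+A))$ then rearranges to
$$f(t_0+\epsilon)-f(t_0) \;\geq\; \int_{t_0}^{t_0+\epsilon}\bigl[\partial_s L + \partial_t L\bigr]\bigl((p_\epsilon, \tau),(q_\epsilon, \tau+A)\bigr)\,d\tau.$$
Sending $\epsilon \to 0^+$ and passing to a convergent subsequence of minimizers, the integrand converges to $(\partial_s + \partial_t) L$ evaluated at a minimizer at $t_0$, which is non-negative by the first paragraph. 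This yields a non-negative right lower Dini derivative for $f$, and continuity of $f$ (inherited from continuity of $L$) then implies $f$ is non-decreasing.

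For constancy on twisted gradient solitons, I would argue via self-similar diffeomorphism invariance: the twisted steady soliton structure should yield a one-parameter family $\phi_\delta$ of diffeomorphisms under which the Ricci flow is a pullback, so that $\mathcal{L}$ and hence $L$ are invariant under the joint shift $(p,s,q,t)\mapsto(\phi_\delta(p), s+\delta, \phi_\delta(q), t+\delta)$; this invariance immediately forces $f(t_0+\delta)=f(t_0)$. The primary technical obstacle is the barrier-sense interpretation of \eqref{LEvol}: one must replace $L$ near a minimizer by a smooth upper support function, apply the PSD Hessian/$\Box$ argument to that support function, and verify the resulting inequality still controls the Dini derivative of $f$. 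Secondary obstacles include existence and compactness of minimizers on possibly non-compact $M$, and extracting the required diffeomorphism invariance directly from the twisted soliton equation, which is stated purely in terms of $L$ and $\Ric$ rather than an explicit generating vector field.
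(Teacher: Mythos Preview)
Your monotonicity argument matches the paper's: at a spatial minimizer of $L$ on $(M\times\{t\})\times(M\times\{t+A\})$ the $M\times M$-Hessian is positive semidefinite, hence $\Box_{M\times M}L\geq 0$ there, and inequality \eqref{LEvol} then forces $\partial_sL+\partial_tL\geq 0$ at that point. The paper simply invokes compactness to realize the infimum and records this as a one-line chain of inequalities; your Dini-derivative formulation with near-minimizers and subsequential limits is a more careful packaging of the same idea (and your caveats about barrier-sense and existence of minimizers are reasonable, though the paper just assumes compactness).

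For constancy on twisted solitons, however, your route departs from the paper's and runs into the obstacle you yourself flag. The paper does \emph{not} produce a generating vector field or any pullback symmetry; it simply observes that Theorem~\ref{MainTheorem} asserts \emph{equality} in \eqref{LEvol} on twisted gradient solitons, so the same chain of inequalities that gave monotonicity now saturates, forcing the time derivative of the infimum to vanish. Your proposed self-similarity argument would require extracting a one-parameter family $\phi_\delta$ from the twisted soliton equation, but that equation is a pointwise relation between $\grad^2_{M\times M}L$ (restricted to the diagonal subspace via $\Id\oplus\parallel_\gamma$) and $\Ric$, with no potential function or flow in sight; there is no evident way to manufacture the required diffeomorphisms from this. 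The paper's route---invoke the equality case of the main theorem directly---is both shorter and avoids this issue entirely.
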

%Is this really ``Precisely"?

\section{First Variation of $\mathcal{L}$}

To begin, we will compute the first variation of the $\mathcal{L}$ functional. This is the first place where this functional can be seen as a promising candidate for a steady length functional. The Euler-Lagrange equation is very similar to the Euler-Lagrange equations for both Perelman and Feldman-Ilmanen-Ni's functionals, except without the $T/\tau$ term.

\begin{prop}\label{FirstVar}
Let $M$ be a smooth manifold with $g_\tau$ a one-parameter family of metrics solving Ricci flow on $[0,\tau^*)$. Letting $\gamma:(p,r)\to (q,t)$ be a smooth path, $V$ a smooth vector field along $\gamma$, and $T=\frac{d}{d\tau}
\gamma(\tau)$ . Then,
$$\delta_{\gamma}\mathcal{L}(V) = 2\langle      V,T\rangle|_{r}^{t}+\int_{r}^{t}\langle V, \grad R-2\grad_T T+4\Ric(T)^\sharp\rangle d\bar{\tau}.$$
Furthermore, the associated geodesic equation is
$$\grad_T T - 2\Ric(T)^\sharp=\frac{1}{2}\grad R.$$

\begin{proof}
To begin, we note the identity
$$\frac{d}{d\bar{\tau}}\langle V,T \rangle = \langle \grad_TV,T \rangle+\langle V,\grad_T T\rangle-2\Ric(V,T)$$
where $T$ is the $\tau$-velocity of $\gamma$. 

Let $\alpha(s,\bar{\tau})=\gamma_s(\bar{\tau})$ with variation field $V$ and tangent $T(\bar{\tau})= \frac{d}{d\bar{\tau}}\alpha(0,\bar{\tau})$. Then the first variation can be found
\begin{align*}
\delta_\gamma \mathcal{L} (V)=& \int_r^{t}(\langle \grad R, V\rangle+\frac{d}{ds}\bigg|_{s=0}\left|\frac{d\alpha}{d\bar{\tau}}\right|^2)d\bar{\tau}\\
=& \int_r^{t}(\langle \grad R, V\rangle+2\langle D_s|_{s=0}\frac{d\alpha}{d\bar{\tau}},T \rangle)d\bar{\tau}\\
=& \int_r^{t}(\langle \grad R, V\rangle+2\langle \grad_T V,T \rangle)d\bar{\tau}\\
=&\int_r^t\langle \grad R,V\rangle +2(\frac{d}{d\bar{\tau}}\langle V,T\rangle - \langle V,\grad_T T\rangle + 2 \Ric(V,T))d\bar{\tau} \\
=&\ 2\langle V,T\rangle|_r^{t}+\int_r^{t}\langle \grad R-2\grad_T T+4\Ric(T)^\sharp, V\rangle d\bar{\tau}.
\end{align*} 
This is precisely the result and the geodesic equation is picked out as usual.
\end{proof}
\end{prop}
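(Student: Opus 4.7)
The plan is to compute the first variation directly via a smooth one-parameter family $\alpha(s,\bar\tau)=\gamma_s(\bar\tau)$ with variation field $V=\D_s\alpha|_{s=0}$ and tangent $T=\D_{\bar\tau}\alpha|_{s=0}$, and then differentiate $\mathcal{L}(\gamma_s)$ in $s$ under the integral. Since the scalar curvature $R_{g_{\bar\tau}}$ is a function on $M$ at each fixed $\bar\tau$, the $R$ term contributes $\langle \grad R, V\rangle$ pointwise with no flow correction. For the kinetic term, I would note that at each fixed $\bar\tau$ the metric is independent of $s$, so $\frac{d}{ds}|\D_{\bar\tau}\alpha|^2_{g_{\bar\tau}} = 2\langle D_s \D_{\bar\tau}\alpha, T\rangle$ at $s=0$, and then use symmetry of the Levi-Civita connection of $g_{\bar\tau}$ to commute the mixed partials and rewrite $D_s \D_{\bar\tau}\alpha|_{s=0} = \grad_T V$.

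The one place where the Ricci flow genuinely enters is the preliminary identity stated at the top of the proof. I would derive it by differentiating $\langle V,T\rangle_{g_{\bar\tau}}$ in $\bar\tau$ and accounting for the extra term $(\D_{\bar\tau} g)(V,T)$ beyond the usual metric-compatibility contributions; substituting the Ricci flow equation then produces the $-2\Ric(V,T)$ correction. Rearranging this identity lets me trade the awkward $\langle \grad_T V, T\rangle$ from the kinetic variation for a total $\bar\tau$-derivative, a $-\langle V,\grad_T T\rangle$ term, and a $+2\Ric(V,T)$ term.

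With this substitution, the total-derivative piece integrates to the boundary contribution $2\langle V,T\rangle|_r^t$, and the remaining pointwise terms collect into $\langle \grad R - 2\grad_T T + 4\Ric(T)^\sharp, V\rangle$, giving the claimed formula. The Euler–Lagrange equation then follows in the standard way by restricting to compactly supported variations on $(r,t)$, whose arbitrariness forces the integrand to vanish, yielding $\grad_T T - 2\Ric(T)^\sharp = \tfrac{1}{2}\grad R$.

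There is no substantive obstacle here beyond careful bookkeeping: the spatial covariant derivative $\grad$ must always be interpreted with respect to $g_{\bar\tau}$ at the value of $\bar\tau$ at which one is integrating, and the only footprint of Ricci flow is the single $-2\Ric(V,T)$ term tracked through the preliminary identity. The argument requires no minimality assumption on $\gamma$, which is important for later use against arbitrary competitor paths.
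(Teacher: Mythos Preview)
Your proposal is correct and follows essentially the same route as the paper: set up a variation $\alpha(s,\bar\tau)$, differentiate the $R$ term to $\langle\grad R,V\rangle$, use the symmetry lemma on the kinetic term to get $2\langle\grad_T V,T\rangle$, then invoke the identity $\frac{d}{d\bar\tau}\langle V,T\rangle=\langle\grad_T V,T\rangle+\langle V,\grad_T T\rangle-2\Ric(V,T)$ (with the $-2\Ric$ coming from $\partial_{\bar\tau}g$) to integrate by parts and collect terms. The paper's proof is identical in structure and detail.
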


\section{Second Variation of $\mathcal{L}$}

The second variation makes use of the following lemma found in Kleiner-Lott \cite{KleinerLott}. The proof is provided here for convenience.
\begin{lemma}\label{ChrisEvol}
	Let $\gamma$ be a curve through the spacetme of a solution to Ricci flow with parameter $\tau$ and $T = \frac{\D}{\D \tau}\gamma$. Then, the following formula holds
	$$\frac{d}{d\tau}\langle \grad_T V, V \rangle= |\grad_T V|^2+\langle \grad_T\grad_T V,V \rangle-2\Ric(\grad_T V, V)-(\grad_T\Ric)(V,V).$$
	\begin{proof}
	We will prove this in the more general setting where $\frac{\partial g}{\partial \tau}=h \in \Sym^2(T^*M)$. Also, we will make use of the notation
	$$\dot{\grad}_XY =  X^iY^l\frac{\D \Gamma_{il}^k}{\D \tau} \D_k.$$
	
	As the result of Lemma \ref{ChrisEvol} is tensorial, we may compute at the center of a normal coordinate neighborhood. 
		\begin{align*}
		\frac{d}{d\tau}\langle \grad_X Y,Z \rangle =&\ \frac{d}{d\tau}\left[X^iZ^k(Y^j_{,i}+Y^l\Gamma_{il}^j)g_{jk}\right]\\
		=&\ \frac{dg_{jk}}{d\tau}X^iY^j_{,i}Z^k+\frac{d}{d\tau}(X^iZ^k)Y^k_{,i}+X^iZ^k\left(\frac{dY^k_{,i}}{d\tau}+Y^l\frac{d}{d\tau}\Gamma_{il}^k\right)\\
		=&\ h_{jk}X^iY^j_{,i}Z^k+\dot{X}^iY^k_{,i}Z^k+X^i\dot{Y}^k_{,i}Z^k+X^iY^k_{,i}\dot{Z}^k+X^iY^lZ^k\frac{\partial}{\partial\tau}\Gamma_{il}^k+X^iY^lZ^kT^p\Gamma_{il,p}^k\\
		=&\ h(\grad_X Y,Z)+\langle \grad_X Y, \grad_T Z\rangle+\langle\dot{\grad}_XY,Z \rangle + T^p[(X^iY_{,i}^k)_{,p} + X^iY^l\Gamma_{il,p}^k]Z^k \\
		=&\ h(\grad_X Y,Z)+\langle \grad_X Y, \grad_T Z\rangle+\langle\dot{\grad}_XY,Z \rangle + T^p[X^iY_{,i}^k + X^iY^l\Gamma_{il}^k]_{,p}Z^k \\
		=&\ h(\grad_X Y,Z)+\langle \grad_X Y, \grad_T Z\rangle+\langle\dot{\grad}_XY,Z \rangle + T^p[\grad_XY^k]_{,p}Z^k \\
		=&\ h(\grad_X Y,Z)+\langle \grad_X Y, \grad_T Z\rangle+\langle\dot{\grad}_XY,Z \rangle + \langle\grad_T\grad_X Y,Z \rangle.
		\end{align*}
		Specializing to the setting of Ricci flow gives the result.
	\end{proof}
\end{lemma}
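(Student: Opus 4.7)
The plan is to reduce the identity to a direct coordinate computation. Both sides are tensorial in $V$ at a given time $\tau_0$, so I would work in a normal coordinate chart centered at $\gamma(\tau_0)$ for $g_{\tau_0}$, where $\G^k_{ij}$ vanishes at the center (though its derivatives do not). This kills the nuisance $\G \cdot \G$ cross terms and makes the Christoffel-evolution contribution appear cleanly, isolated from every other piece.

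First I would establish the identity for three independent vector fields $X$, $Y$, $Z$ along $\gamma$ in the general setting $\partial_\tau g = h$, aiming for
\[
\frac{d}{d\tau}\langle \grad_X Y, Z \rangle = h(\grad_X Y, Z) + \langle \grad_X Y, \grad_T Z\rangle + \langle \dot{\grad}_X Y, Z\rangle + \langle \grad_T \grad_X Y, Z\rangle,
\]
where $\dot{\grad}$ denotes the operator built from $\partial_\tau \G^k_{ij}$. Expanding $\langle \grad_X Y, Z\rangle = g_{jk}(X^i Y^j_{,i} + X^i Y^\ell \G^j_{i\ell})Z^k$ and applying $d/d\tau$ produces four families of terms: (i) the derivative of $g_{jk}$, contributing exactly $h(\grad_X Y, Z)$; (ii) the $\tau$-derivatives of the coordinate components of $X$, $Y$, $Z$, together with the spatial piece $T^p \partial_p(X^i Y^j_{,i})$ that arises from $\tau$-differentiating $Y^j_{,i}$ along $\gamma$, which after accounting for the vanishing of $\G$ at the center reassemble into $\langle \grad_T(\grad_X Y), Z\rangle + \langle \grad_X Y, \grad_T Z\rangle$; (iii) the derivative $\partial_\tau \G^j_{i\ell}$, which is by definition $\langle \dot{\grad}_X Y, Z\rangle$.

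For the specialization I would set $X=T$ and $Y=Z=V$, substitute $h = -2\Ric$, and recognize $\langle \grad_T V, \grad_T V\rangle = |\grad_T V|^2$. The only remaining point is to show $\langle \dot{\grad}_T V, V\rangle = -(\grad_T\Ric)(V,V)$. Inserting the classical evolution $\partial_\tau \G^k_{i\ell} = -g^{kp}(\grad_i R_{p\ell} + \grad_\ell R_{ip} - \grad_p R_{i\ell})$ into $T^i V^\ell V_k\, \partial_\tau \G^k_{i\ell}$ and relabeling $\ell \leftrightarrow p$ in one of the last two summands, the symmetry of $\Ric$ makes them cancel, leaving precisely $-(\grad_T \Ric)(V,V)$.

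The main bookkeeping hazard is step (ii): it is easy to end up with stray $T^p \G$ or $\partial \G$ terms that do not obviously reorganize into $\langle \grad_T(\grad_X Y), Z\rangle$. Working at the origin of a normal chart is what makes the assembly forced, since products of two Christoffels drop out and the sole surviving $T^p \partial_p \G^j_{i\ell}$ term is exactly the one already accounted for under $\dot{\grad}$. Once the general identity is in place, the Ricci-flow specialization is the short symmetry-based cancellation described above.
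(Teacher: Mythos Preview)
Your approach is essentially identical to the paper's: both derive the general identity
\[
\frac{d}{d\tau}\langle \grad_X Y, Z\rangle = h(\grad_X Y, Z) + \langle \grad_X Y, \grad_T Z\rangle + \langle \dot\grad_X Y, Z\rangle + \langle \grad_T \grad_X Y, Z\rangle
\]
in normal coordinates and then specialize to $X=T$, $Y=Z=V$, $h=-2\Ric$; you are in fact more explicit than the paper in carrying out the specialization, correctly reducing $\langle\dot\grad_T V,V\rangle$ to $-(\grad_T\Ric)(V,V)$ via the symmetry cancellation.

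One small bookkeeping slip in your final paragraph: the spatial term $T^p\,\partial_p\Gamma^j_{i\ell}$ is \emph{not} part of $\dot\grad$ (which by your own definition involves only $\partial_\tau\Gamma$). Rather, it is exactly the extra piece needed to complete $\grad_T\grad_X Y$ at the center of the normal chart, since there $(\grad_T\grad_X Y)^k = T^p\big[(X^iY^k_{,i})_{,p} + X^iY^\ell\,\Gamma^k_{i\ell,p}\big]$. So your step (ii) as written is one term short and your step (iii) one term long; the sum is of course unaffected, and the argument goes through unchanged once this is corrected.
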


It will be noticed that in the following proposition, we once again have a result very similar to those found by Perelman and Feldman-Ilmanen-Ni, except for a term like $(\grad_T V)/\tau$.
\begin{prop} \label{SecondVar}
	Under the same conditions as Propositon \ref{FirstVar}, the total second variation of $\mathcal{L}$ (denoted by $Q$) is given by
	\begin{multline} \label{QC2}
	Q(V,V) =\ 2\langle \grad_T V,V\rangle|_r^{t}+\int_r^{t}\langle V, \grad^2 R(V)^\sharp -2Rm(V,T)T\\
	+4(\grad_V\Ric)(T)^\sharp-2\grad_T\grad_T V + 4\Ric(\grad_T V)^\sharp\rangle d\bar{\tau}.
	\end{multline}
	Equivalently,
	\begin{equation}\label{QC1}
	Q(V,V)=\int_r^{t} 2|\grad_T V|^2+2(\grad_T\Ric)(V,V)+\grad^2R(V,V)-2Rm(V,T,T,V)-4(\grad_V\Ric)(T,V)d\bar{\tau}.
	\end{equation}
	%Double-check signs
	Furthermore, we find the following Jacobi equation.
	$$\grad_T\grad_TV + Rm(V,T)T - 2(\grad_V \Ric)(T)^\sharp - 2\Ric(\grad_T V)^\sharp-\frac{1}{2}\grad^2 R(V)^\sharp=0$$
\begin{proof}
We will prove Equation \ref{QC2} by taking the same variation $\alpha$ as in Proposition \ref{FirstVar}, differentiating twice, and applying the Symmetry Lemma and Lemma \ref{ChrisEvol}. 
\begin{align*}
\delta^2_\gamma \mathcal{L}(V,V)=& \int_a^{b}VVR+2\frac{d}{ds}|_{s=0}(\langle D_s\frac{d\alpha}{d\bar{\tau}},\frac{d\alpha}{d\bar{\tau}} \rangle)d\bar{\tau}\\
=&\int_a^{b}VVR+2\langle \grad_V\grad_VT, T\rangle +2|\grad_VT|^2d\bar{\tau}\\
=&\int_a^{b}VVR-2Rm(V,T,T,V)\\
&+2\left(\frac{d}{d\tau}\langle \grad_V V,T\rangle -\langle \grad_V V\grad_T T\rangle +2\Ric(\grad_V V,T)+2(\grad_V \Ric)(V,T)\right.\\
&\left.-(\grad_T \Ric)(V,V)\right)+2\left(\frac{d}{d\tau}\langle \grad_T V,V \rangle -\langle \grad_T \grad_T V,V\rangle+2\Ric(\grad_TV,V)\right.\\
&\left.+(\grad_T\Ric)(V,V) \right) d\bar{\tau}\\
=&\ 2\langle\grad_V V,T \rangle|_a^{b}+ 2\langle \grad_T V,V\rangle|_a^{b}+\int_a^{b}[VVR-2Rm(V,T,T,V)-2\langle \grad_V V,\grad_T T\rangle\\ 
&+ 4\Ric(\grad_V V, T)+4(\grad_V\Ric)(V,T)-2\langle \grad_T\grad_T V, V\rangle +4\Ric(\grad_T V,V)]d\bar{\tau}.
\end{align*}
Thus the tensorial second variation is given by
\begin{align*}
Q(V,V)=&\ \delta^2_\gamma \mathcal{L}(V,V)-\delta_\gamma\mathcal{L}(\grad_VV)\\
=&\ 2\langle \grad_T V,V\rangle|_a^{b}+\int_a^{b}\langle V, \grad^2 R(V)^\sharp -2Rm(V,T)T+4(\grad_V\Ric)(T)^\sharp-2\grad_T^2 V+ 4\Ric(\grad_T V)^\sharp\rangle d\bar{\tau}.
\end{align*}
Equation \ref{QC1} follows from integrating by parts.
\end{proof}
\end{prop}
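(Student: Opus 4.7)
The plan is to take a smooth two-parameter variation $\alpha(s,\bar\tau)$ of $\gamma$ with variation field $V = \D_s\alpha|_{s=0}$ and tangent $T = \D_{\bar\tau}\alpha|_{s=0}$, compute $\delta^2_\gamma\mathcal{L}(V,V) = \frac{d^2}{ds^2}\big|_{s=0}\mathcal{L}(\alpha(s,\cdot))$, and then extract the tensorial part $Q(V,V)$ by subtracting $\delta_\gamma \mathcal{L}(\grad_V V)$, which is precisely the content of Proposition \ref{FirstVar} applied to the acceleration field of the variation.

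Term by term, the scalar curvature contributes $V(VR)$ to the integrand, while the kinetic energy contributes $2 \langle \grad_V \grad_V T, T\rangle + 2|\grad_V T|^2$. Using $[\D_s, \D_{\bar\tau}] = 0$ together with the torsion-free property of $\grad$ gives $\grad_V T = \grad_T V$, and commuting covariant derivatives via the Riemann tensor produces $\grad_V \grad_T V = \grad_T \grad_V V + R(V,T)V$; the curvature piece immediately generates the $-2Rm(V,T,T,V)$ contribution. The main work is then to reduce $\langle \grad_T \grad_V V, T\rangle$ and $|\grad_T V|^2$ to expressions with only a single covariant derivative on $V$. For this I would appeal to the general identity established in the proof of Lemma \ref{ChrisEvol} (valid for any $\frac{d}{d\bar\tau}\langle \grad_X Y, Z\rangle$ along the variation): it expresses $\langle \grad_T \grad_X Y, Z\rangle$ as a total $\bar\tau$-derivative of $\langle \grad_X Y, Z\rangle$ minus corrections from the metric evolution $h=-2\Ric$, from $\grad_T Z$, and from the Christoffel evolution $\dot\Gamma$. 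Applying this to both $\frac{d}{d\bar\tau}\langle \grad_V V, T\rangle$ and $\frac{d}{d\bar\tau}\langle \grad_T V, V\rangle$, integrating the $\bar\tau$-derivatives into boundary contributions, and absorbing the terms involving $\grad_V V$ into $-\delta_\gamma \mathcal{L}(\grad_V V)$, yields \eqref{QC2}. To obtain \eqref{QC1}, I would apply Lemma \ref{ChrisEvol} once more to the $\langle V, -2\grad_T\grad_T V\rangle$ term of \eqref{QC2}; the boundary piece it generates cancels the $2\langle \grad_T V, V\rangle|_r^t$ of \eqref{QC2}, producing the symmetric integral form. The Jacobi equation is then read off as the Euler--Lagrange condition for $Q$.

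The main obstacle is the bookkeeping of the Christoffel-derivative term $\langle \dot\grad_V V, T\rangle$: under the Ricci flow evolution $\dot\Gamma^k_{ij} = -\grad_i \Ric^k_j - \grad_j \Ric^k_i + \grad^k \Ric_{ij}$, the three index-raised covariant derivatives of $\Ric$ must collapse cleanly, via Ricci symmetry, into the correct linear combination of $(\grad_V \Ric)(T,V)$ and $(\grad_T \Ric)(V,V)$. After that identification, the remainder is a careful accounting of total $\bar\tau$-derivatives and matching of boundary contributions across the two applications of the lemma.
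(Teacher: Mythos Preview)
Your proposal is correct and follows essentially the same route as the paper: the same variation $\alpha$, the same decomposition of the kinetic term into $2\langle\grad_V\grad_VT,T\rangle+2|\grad_VT|^2$, the Symmetry Lemma and curvature commutation to produce the $Rm$ term, two applications of Lemma \ref{ChrisEvol} (to $\frac{d}{d\bar\tau}\langle\grad_VV,T\rangle$ and $\frac{d}{d\bar\tau}\langle\grad_TV,V\rangle$), and finally the subtraction of $\delta_\gamma\mathcal{L}(\grad_VV)$ to isolate $Q$. Your identification of the $\dot\Gamma$ bookkeeping as the delicate point is apt; the paper handles it implicitly in passing from the second to the third displayed line, and your explicit use of $\dot\Gamma^k_{ij}=-\grad_i\Ric^k_j-\grad_j\Ric^k_i+\grad^k\Ric_{ij}$ makes transparent why the combination $2(\grad_V\Ric)(V,T)-(\grad_T\Ric)(V,V)$ appears.
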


\section{First Derivatives of $L$}
In this section, we compute first derivatives of $L$ using the variations of $\mathcal{L}$ computed above.

\begin{prop} \label{Gradient}
	The gradients of $L$ (as defined in Definition \ref{DistanceFunction}) are given by the following 
	\begin{align*}
	\grad_p L((p,r),(q,t)) =&-2T(r)\\
	\grad_q L((p,r),(q,t)) =&\  2T(t).
	\end{align*}
\begin{proof}
Let $c(s)$ be a smooth curve along $M$ w/ $c(0)=p$ and $c^\prime(0)=v\in T_{p}M$. Then fixing $a$, define $\alpha$ a variation through geodesics s.t. $\alpha(s,r)=c(s)$ and $\alpha(s,t)=q$. Define $V = \frac{d}{ds}\big|_{s=0}\alpha$. Clearly, $V(r)=v$ and $V(t)=0$. Then, by the first variation formula,
$$\langle \grad_p L((p,r),(q,t)), v\rangle = \frac{d}{ds}\big|_{s=0}\mathcal{L}(\alpha) = 2\langle V,T \rangle|_r^{t} = -2\langle v,T \rangle(r).$$
Similarly, 
$$\langle\grad_qL((p,r),(q,t)), v \rangle= 2\langle T,v\rangle(t).$$
\end{proof}
\end{prop}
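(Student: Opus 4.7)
The plan is to compute each gradient of $L$ by constructing a one-parameter smooth family of paths that realizes $L$ at the base point and provides a smooth upper barrier nearby, then differentiating using the first variation formula from Proposition \ref{FirstVar}. Since $L$ is the infimum of the functional $\mathcal{L}$, this barrier approach is what makes differentiation work cleanly: the variation produces a smooth function that agrees with $L$ at the base point and dominates it nearby, so its derivative bounds the gradient of $L$ in the appropriate one-sided sense, with equality at the unique minimizer.

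For the first slot, I would fix a minimizing $\mathcal{L}$-geodesic $\gamma$ from $(p,r)$ to $(q,t)$, choose a smooth curve $c(s)$ in $M$ with $c(0)=p$ and $c'(0)=v\in T_p M$, and build a smooth variation $\alpha(s,\bar\tau)$ with $\alpha(s,r)=c(s)$, $\alpha(s,t)=q$, and $\alpha(0,\cdot)=\gamma$. The variation vector $V(\bar\tau)=\partial_s\alpha(s,\bar\tau)|_{s=0}$ then satisfies $V(r)=v$ and $V(t)=0$. Applying Proposition \ref{FirstVar} to $\gamma$ kills the bulk Euler-Lagrange integral, leaving only the boundary term $2\langle V,T\rangle|_r^t = -2\langle v, T(r)\rangle$, and the defining identity of the gradient reads off $\grad_p L((p,r),(q,t)) = -2T(r)$.

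An entirely parallel argument, with the second endpoint varying through $(c(s),t)$ and the first endpoint fixed at $(p,r)$, produces a variation vector satisfying $V(r)=0$ and $V(t)=v$, whereupon the boundary contribution becomes $+2\langle v, T(t)\rangle$ and yields $\grad_q L((p,r),(q,t)) = 2T(t)$. The sign flip between the two formulas is due solely to the orientation of the boundary evaluation in $2\langle V, T\rangle|_r^t$.

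The one subtlety meriting attention, and the step I would flag as the main obstacle, is the smoothness of $L$ at the base point. Where the minimizing $\mathcal{L}$-geodesic between the two spacetime points is non-unique, $L$ is generally only locally Lipschitz, and the identities above are best read as saying that $-2T(r)$ and $2T(t)$ furnish smooth upper barriers in their respective slots. This is the interpretation consistent with the barrier formulation adopted in Theorem \ref{MainTheorem} and Corollary \ref{Monot}, and in practice one either works at a point of differentiability or invokes this barrier reading directly, so the argument goes through without further technical work.
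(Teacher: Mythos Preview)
Your proposal is correct and follows essentially the same approach as the paper: construct a variation of the minimizing $\mathcal{L}$-geodesic moving one endpoint along a curve $c(s)$ while fixing the other, apply the first variation formula (Proposition \ref{FirstVar}), and read off the gradient from the surviving boundary term $2\langle V,T\rangle|_r^t$. The only cosmetic difference is that the paper takes $\alpha$ to be a variation \emph{through geodesics} (so $\mathcal{L}(\alpha(s,\cdot))=L((c(s),r),(q,t))$ for small $s$ and one differentiates $L$ directly), whereas you allow a general variation and invoke the barrier interpretation; your added remarks on the cut-locus subtlety are a welcome clarification the paper leaves implicit.
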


\begin{cor}\label{GradNorms}
	As direct consequences of Proposition \ref{Gradient}, we can compute the following quantities.
	\begin{align*}
	\langle \grad_pL, T \rangle ((p,r),(q,t)) =& -2|T|^2(r)\\
	|\grad_pL|^2((p,r),(q,t))=&\ 4|T|^2(r)\\
	\langle\grad_qL, T \rangle((p,r),(q,t))=&\ 2|T|^2(t)\\
	|\grad_q L|^2((p,r),(q,t))=&\ 4|T|^2(t).
	\end{align*}
\end{cor}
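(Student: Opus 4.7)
The statement is essentially a direct substitution exercise once Proposition \ref{Gradient} is in hand, so the plan is very short and there is no genuine obstacle to anticipate.

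The plan is to take the formulas $\grad_p L((p,r),(q,t)) = -2T(r)$ and $\grad_q L((p,r),(q,t)) = 2T(t)$ from Proposition \ref{Gradient} and pair each with itself, then pair each with $T$ at the appropriate endpoint, with respect to the metric $g_{\bar\tau}$ at the corresponding time. For the first identity, I would compute
\[
\langle \grad_p L, T\rangle(r) = \langle -2T(r), T(r)\rangle_{g_r} = -2|T|^2(r),
\]
and for the second, the norm squared gives $|\grad_p L|^2 = 4|T|^2(r)$ by bilinearity. The $q$ side is identical with the sign flipped and the inner product evaluated at $\bar\tau = t$.

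The only mild subtlety worth flagging is keeping the base points straight: $\grad_p L$ lives in $T_p M$ with the metric $g_r$, while $\grad_q L$ lives in $T_q M$ with the metric $g_t$, so the velocity $T$ must be evaluated at the matching endpoint ($T(r)$ for the $p$ computations, $T(t)$ for the $q$ computations). Once this bookkeeping is noted, the four identities are immediate and the corollary follows in a couple of lines with no further input.
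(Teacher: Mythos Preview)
Your proposal is correct and matches the paper's approach: the paper simply marks this corollary with a \qed, treating all four identities as immediate substitutions from Proposition~\ref{Gradient}. Your remark about matching the endpoint metric ($g_r$ at $p$, $g_t$ at $q$) to the corresponding velocity is the only bookkeeping needed, and you have it right.
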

\qed

With knowledge of the gradients, we can compute the time-derivatives by making use of the chain rule.
\begin{prop}\label{LEvol2}
	The following evolutions hold 
	$$\frac{\partial L}{\partial s}((p,s),(q,t))=-R_{g_s}(p)+|T|^2(s),$$
	$$\frac{\partial L}{\partial t}((p,s),(q,t))=R_{g_t}(q)-|T|^2(t).$$
\begin{proof}
Let $\gamma(\bar{\tau}):[s,t]\to M$ be an $L$-minimizing geodesic with $\gamma(s)=p$ and $\gamma(t)=q$. Then, by the chain rule, we have 
$$\frac{\D L}{\D s}((p,s),(q,t)) = \frac{d\mathcal{L}}{ds}(\gamma:(p,s)\to (q,t))-\langle \grad_p L,T\rangle ((p,s),(q,t))$$
where $\frac{d}{d\bar{\tau}}\gamma(\bar{\tau})=T$.

The first term can be computed by the fundamental theorem.
$$\frac{d}{ds}\mathcal{L}(\gamma:(p,s)\to (q,t)) = \frac{d}{ds}\int_s^{t}(R_{g_{\bar{\tau}}}+|T|^2)d\bar{\tau} = -R_{g_s}(p)-|T|^2(s).$$

Invoking Corollary \ref{GradNorms},
\begin{align*}
\frac{\partial L}{\partial s}((p,s),(q,t)) = & \frac{d\mathcal{L}}{ds}(\gamma:(p,s)\to (q,t))-\langle\grad_pL,T \rangle((p,s),(q,t))\\
=&-R_{g_s}(p)+|T|^2(s).
\end{align*}
The argument proceeds similarly for the $t$-partial.
\end{proof}
\end{prop}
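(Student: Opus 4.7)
The plan is to exploit the fact that any sub-segment of an $\mathcal{L}$-minimizing geodesic is itself $\mathcal{L}$-minimizing, which converts $\partial_s L$ into an application of the fundamental theorem of calculus combined with the gradient identities of Corollary \ref{GradNorms}.

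Fix an $\mathcal{L}$-minimizing geodesic $\gamma:[s,t]\to M$ from $(p,s)$ to $(q,t)$ with $T=d\gamma/d\bar{\tau}$. For any $s'\in[s,t)$, a standard cut-and-paste argument shows that $\gamma|_{[s',t]}$ is itself an $\mathcal{L}$-minimizer from $(\gamma(s'),s')$ to $(q,t)$: otherwise one could splice a strict improvement into $\gamma$ and contradict its minimality. Hence
$$L\bigl((\gamma(s'),s'),(q,t)\bigr)=\int_{s'}^{t}\bigl(R_{g_{\bar{\tau}}}+|T|^{2}\bigr)\,d\bar{\tau}$$
for $s'$ in a one-sided neighborhood of $s$.

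Differentiating this identity in $s'$ at $s'=s$, the right-hand side produces $-R_{g_s}(p)-|T|^{2}(s)$ by the fundamental theorem of calculus, while the left-hand side, by the chain rule for $L$ applied to the curve $s'\mapsto(\gamma(s'),s')$, yields $\langle\grad_p L,T(s)\rangle+\partial_s L$. Invoking Corollary \ref{GradNorms} to substitute $\langle\grad_p L,T(s)\rangle=-2|T|^{2}(s)$ and solving gives $\partial_s L=-R_{g_s}(p)+|T|^{2}(s)$. The $t$-derivative is obtained symmetrically: apply sub-segment minimality to $\gamma|_{[s,t']}$, differentiate the resulting identity $L((p,s),(\gamma(t'),t'))=\int_s^{t'}(R+|T|^2)\,d\bar{\tau}$ in $t'$ at $t'=t$, and use $\langle\grad_q L,T(t)\rangle=2|T|^{2}(t)$ from the same corollary.

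The only step requiring any real care is the sub-segment minimality claim, which implicitly assumes we are in a regime where $L$ is smooth and the minimizing geodesic $\gamma$ is unique; this will be the standing hypothesis throughout the paper, and once granted, the rest is a routine chain-rule manipulation using Proposition \ref{Gradient}.
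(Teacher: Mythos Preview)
Your argument is correct and is essentially identical to the paper's: both differentiate the identity $L((\gamma(s'),s'),(q,t))=\int_{s'}^{t}(R+|T|^{2})\,d\bar\tau$ in $s'$, apply the fundamental theorem of calculus, and then use Corollary~\ref{GradNorms} to isolate $\partial_s L$ from the chain-rule expansion. The only cosmetic difference is that you spell out the sub-segment minimality step that the paper leaves implicit in its chain-rule line.
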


\section{Hessian Estimate and Proof of Theorem}

\begin{prop} \label{HessianEst}
Let $I$ be an interval in $\mathbb{R}$ and $L:(M\times I) \times (M\times I)\to \mathbb{R}$ as defined above. Fix a minimizing geodesic $\gamma:[s,t]\to M$ with $\gamma(s)=p$ and $\gamma(t)=q$. Let $V$ be a vector field along $\gamma$ solving 
$$\grad_T V = \Ric(V)^\sharp.$$ 
Then, the following Hessian estimate holds
$$\grad^2_{M\times M} L (V(s)\oplus V(t),V(s)\oplus V(t))\leq 2\Ric(V,V)|_{s}^t-\int_s^{t}H(T,V)d\bar{\tau}.$$
Where 
\begin{multline*}
H(T,V)=2\Ric_\tau(V,V)+4[(\grad_T\Ric)(V,V)-(\grad_V\Ric)(V,T)]\\
+2|\Ric(V)|^2
+2Rm(V,T,T,V)-\grad^2 R(V,V)
\end{multline*}
The equality holds precisely when $V$ is $\mathcal{L}$-Jacobi.

\begin{proof}
Take $V$ solving $\grad_TV=\Ric V^\sharp$ on $[s,t]$. Then, by a standard argument, we find that
$$\grad_{M\times M}^2L(V(s)\oplus V(t),V(s)\oplus V(t))\leq Q(V,V)$$
with equality holds precisely when $V$ is $\mathcal{L}$-Jacobi.

We must also utilize
\begin{align*}
\Ric(V,V)|_s^{t}=& \int_s^{t}\frac{d}{d\bar{\tau}}[\Ric(V,V)]d\bar{\tau}\\
=&\int_s^{t}\Ric_{\bar{\tau}}(V,V) + (\grad_T\Ric)(V,V)+2\Ric(\grad_TV,V)d\bar{\tau}.
\end{align*}
This gives 
\begin{align*}
Q(V,V) =& 2\Ric(V,V)|_{s}^t+2\int_s^{t}\Ric_{\bar{\tau}}(V,V) + (\grad_T\Ric)(V,V)+2\Ric(\grad_TV,V)d\bar{\tau}+Q(V,V)\\
=&2\Ric(V,V)|_{s}^t-\int_s^{t}H(T,V)d\bar{\tau}.
\end{align*}
where $H(T,V)$ is as was suggested.
\end{proof}
\end{prop}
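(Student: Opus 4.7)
The plan is to follow the classical recipe for Hessian bounds on distance-type functionals, adapted to the two-endpoint situation here. The strategy has two stages: first, bound $\grad^2_{M\times M} L$ above by the tensorial second variation $Q(V,V)$ of $\mathcal{L}$ from Proposition \ref{SecondVar}; second, use the specific ODE $\grad_T V = \Ric(V)^\sharp$ together with a fundamental-theorem-of-calculus identity in $\bar{\tau}$ to convert the integral form \ref{QC1} of $Q(V,V)$ into the desired form carrying a boundary $2\Ric(V,V)|_s^t$ term.

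For the first stage I would construct a genuine two-parameter variation of endpoints. Let $p(u),q(u)$ be paths in $M$ with $p'(0)=V(s)$ and $q'(0)=V(t)$, and let $\alpha(u,\bar{\tau})$ be a family of curves with $\alpha(u,s)=p(u)$, $\alpha(u,t)=q(u)$, $\alpha(0,\cdot)=\gamma$, and variation field $V$ along $\gamma$. Since $\gamma$ is $\mathcal{L}$-minimizing, $u\mapsto L((p(u),s),(q(u),t))$ is bounded above by $\mathcal{L}(\alpha(u,\cdot))$ with equality at $u=0$, so the second derivative of the difference is nonnegative. Differentiating both sides twice and invoking the gradient formulas $\grad_p L=-2T(s)$ and $\grad_q L=2T(t)$ from Proposition \ref{Gradient}, the extra Hessian-correction term $\langle\grad L,\grad_W W\rangle_{M\times M}$ on the left (with $W=V(s)\oplus V(t)$) exactly cancels the boundary piece $2\langle \grad_V V,T\rangle|_s^t$ hiding inside $\delta^2_\gamma\mathcal{L}$. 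What remains is $\grad^2_{M\times M}L(W,W)\leq Q(V,V)$, with equality precisely when $V$ is $\mathcal{L}$-Jacobi.

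For the second stage I would differentiate $\Ric(V,V)$ along $\gamma$ by a direct coordinate computation with the metric evolving by $-2\Ric$, obtaining
$$\frac{d}{d\bar{\tau}}\Ric(V,V)=\Ric_{\bar{\tau}}(V,V)+(\grad_T\Ric)(V,V)+2\Ric(\grad_T V,V).$$
Multiplying by $2$ and integrating over $[s,t]$ produces $2\Ric(V,V)|_s^t$ as a specific integral. Subtracting from this the integral-form expression \ref{QC1} of $Q(V,V)$, the two $(\grad_T\Ric)(V,V)$ contributions cancel; then substituting $\grad_T V=\Ric(V)^\sharp$ collapses $4\Ric(\grad_T V,V)-2|\grad_T V|^2$ into $2|\Ric(V)|^2$. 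Rearranging yields $Q(V,V)=2\Ric(V,V)|_s^t-\int_s^{t}H(T,V)\,d\bar{\tau}$ with the stated integrand, and combining with the first-stage inequality completes the proof.

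The main obstacle is careful bookkeeping of several boundary terms playing overlapping roles: the $2\langle \grad_T V,V\rangle|_s^t$ visible in the QC2 form of $Q$, the $2\langle \grad_V V,T\rangle|_s^t$ produced by varying both endpoints, and the target $2\Ric(V,V)|_s^t$. The first is sidestepped by working with form \ref{QC1} rather than \ref{QC2}; the second is absorbed into the Hessian-versus-$Q$ comparison via the explicit gradients of $L$ at both endpoints (this is where having two free endpoints, rather than one, pays off cleanly); and the third is manufactured from the integration-by-parts identity above. The ODE $\grad_T V=\Ric(V)^\sharp$ is calibrated precisely to make $\langle\grad_T V,V\rangle=\Ric(V,V)$, so that all the pieces line up. Equality in the final estimate coincides with equality in the Hessian-versus-$Q$ comparison, i.e.\ with $V$ being $\mathcal{L}$-Jacobi, since every other step is an exact identity.
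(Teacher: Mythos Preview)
Your proposal is correct and follows essentially the same route as the paper: first bound the two-point Hessian by the index form $Q(V,V)$ via the standard second-variation comparison (which the paper invokes without detail), then add and subtract $2\Ric(V,V)|_s^t$ using the fundamental-theorem-of-calculus identity for $\frac{d}{d\bar\tau}\Ric(V,V)$ and simplify with $\grad_T V=\Ric(V)^\sharp$. Your explicit handling of the endpoint bookkeeping (cancelling $2\langle\grad_V V,T\rangle|_s^t$ against the gradient terms from Proposition~\ref{Gradient}) fills in exactly what the paper's ``standard argument'' glosses over.
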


\begin{prop}
	Let $\gamma:(p,s)\to (q,t)$ smoothly. Fix $\{e_i\}$ an orthonormal frame along $\gamma$. Then,
	$$\sum_i H(T,e_i)=\frac{d}{d\tau}(R+|T|^2)$$
	on $(s,t)$.
	\begin{proof} 
		As all quantities are tensorial, we may fix a time $\bar{\tau}\in(s,t)$ and compute there.
		\begin{align*}
		\sum_i H(T,e_i)=& 2R_\tau +4[\grad_T R-\operatorname{div}\Ric(T)]+2|\Ric|^2+2\Ric(T,T)-\Delta_g R\\
		=&R_\tau+2\langle \grad R,T\rangle +2\Ric(T,T).
		\end{align*}
		This follows quickly from the contracted Bianchi identity and the definition of Ricci curvature.
		
		To notice the right-hand side, compute using the geodesic equation.
		\newline
		\begin{align*}
		\frac{d}{d\tau}(R+|T|^2)=&\ R_\tau +\langle \grad R,T\rangle + 2\langle \grad_T T,T\rangle -2\Ric(T,T)\\
		=&\ R_\tau+ 2\langle \grad R, T\rangle + 2\Ric(T,T)
		\end{align*}
		This is precisely what was expected.
	\end{proof}
\end{prop}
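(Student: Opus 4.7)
The plan is to verify the identity by expanding both sides at an arbitrary point $\bar{\tau}\in(s,t)$ along $\gamma$; since all quantities involved are tensorial, I would fix an orthonormal frame $\{e_i\}$ at this point and compute there.

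I would first trace the six terms comprising $H(T,V)$ one by one using standard Riemannian identities. By metric compatibility of $\grad$, $\sum_i(\grad_T\Ric)(e_i,e_i)=\grad_T R$; by the contracted second Bianchi identity, $\sum_i(\grad_{e_i}\Ric)(e_i,T)=(\dv\Ric)(T)=\frac{1}{2}\grad_T R$, so the pair of $\grad\Ric$ terms combine as $4[\grad_T R-\frac12\grad_T R]=2\langle\grad R,T\rangle$. The remaining algebraic terms trace cleanly: $\sum_i|\Ric(e_i)|^2=|\Ric|^2$, $\sum_i Rm(e_i,T,T,e_i)=\Ric(T,T)$ by the definition of the Ricci tensor, and $\sum_i\grad^2R(e_i,e_i)=\Delta R$. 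The $\Ric_\tau$-term is the most delicate: under the Ricci flow evolution $\partial_\tau g=-2\Ric$, the trace of $\Ric_\tau$ differs from $\partial R/\partial\tau$ by a $2|\Ric|^2$ contribution from the $\tau$-derivative of $g^{-1}$. Combining this with the scalar curvature evolution $R_\tau=\Delta R+2|\Ric|^2$, the stray $\Delta R$ and $|\Ric|^2$ pieces collapse and I expect $\sum_i H(T,e_i)$ to reduce to $R_\tau+2\langle\grad R,T\rangle+2\Ric(T,T)$.

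For the right-hand side, I would differentiate $R+|T|^2$ along $\gamma$ via the chain rule. The $R$-piece contributes $R_\tau+\langle\grad R,T\rangle$, while for $|T|^2$ I would apply the specialization of Lemma~\ref{ChrisEvol} (with $Y=Z=T$) to get $2\langle\grad_T T,T\rangle-2\Ric(T,T)$, the last term arising from the $-2\Ric$ evolution of the metric. Substituting the $\mathcal{L}$-geodesic equation $\grad_T T=\frac{1}{2}\grad R+2\Ric(T)^\sharp$ established in Proposition~\ref{FirstVar} rewrites $2\langle\grad_T T,T\rangle$ as $\langle\grad R,T\rangle+4\Ric(T,T)$, so the total telescopes to $R_\tau+2\langle\grad R,T\rangle+2\Ric(T,T)$, matching the expression found for $\sum_i H(T,e_i)$.

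The main subtlety is bookkeeping the sign conventions carefully: several $|\Ric|^2$ and $\Ric(T,T)$ contributions appear on both sides from the evolving metric, and the identity really amounts to the statement that these cancel once the contracted Bianchi identity, the scalar curvature evolution, and the $\mathcal{L}$-geodesic equation are all brought simultaneously into play.
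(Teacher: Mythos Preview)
Your proposal is correct and follows essentially the same route as the paper: trace $H(T,\cdot)$ using the contracted Bianchi identity and the scalar curvature evolution, then match against $\frac{d}{d\tau}(R+|T|^2)$ computed via the $\mathcal{L}$-geodesic equation. In fact you are more explicit than the paper about the $\Ric_\tau$ trace --- the paper's first displayed line silently identifies $\sum_i 2\Ric_\tau(e_i,e_i)$ with $2R_\tau$, whereas you correctly flag the $2|\Ric|^2$ correction coming from $\partial_\tau g^{-1}$ before invoking $R_\tau=\Delta R+2|\Ric|^2$ to collapse everything.
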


For the statement of the main theorem, a definition is needed. As it has only been shown that the Hessian is estimated along vector fields solving a first order ODE, we will have estimates for the trace only on an $n$-dimensional subspace of $T_pM\oplus T_q M$. 

\begin{definition}
	Let $\gamma:(p,s)\to (q,t)$ min geo. Then we can define an operator $\parallel_\gamma:T_pM \to T_qM$ as follows. Let $v\in T_p M$, then by the standard linear first order ODE theory, we can solve
	$$\begin{cases}
	\grad_T V= \Ric(V)^\sharp \\
	V(s)=v
	\end{cases}.$$
	Then $\parallel_\gamma v := V(t)\in T_qM$.
\end{definition}

\begin{definition}
	For $f\in C^2((M\times I)\times (M\times I))$, take $\gamma: (p,s)\to (q,t)$ a minimizing $\mathcal{L}$-geodesic. Then we can define the operator $\Box_{M\times M}$ as follows.
	$$\Box_{M\times M} f((p,s),(q,t))= \tr_{g} \left[\grad^2_{M\times M}f \left((\Id_{T_pM} \oplus \parallel_\gamma)\bullet, (\Id_{T_pM} \oplus \parallel_\gamma)\bullet\right)\right].$$
\end{definition}
Notice that this operator is well-defined away from the mutual cut locus, as it is defined invariantly and the choice of geodesic is unique therein.

	\begin{proof}[Proof of Theorem \ref{MainTheorem}, Inequality \ref{LEvol}]
	Take an orthonormal basis $\{e_i\}$ at $p$. Then, extend these vectors to fields $\{E_i(\bar{\tau})\}$ along $\gamma:(p,s)\mapsto (q,t)$ (a minimizing geodesic) by solving the equation
	$$\grad_T E_i = \Ric(E_i)^\sharp.$$
	Notice that since 
	$$\frac{d}{d\tau}\langle E_i,E_j \rangle = 0$$
	this differential equation extends the orthonormal basis at $p$ to an orthonormal frame along $\gamma$ . This implies that $\{E_i(\bar{\tau})\}$ evaluates to an orthonormal basis at $q = \gamma(t)$. Tracing inequality \ref{HessianEst}  along the basis $(\frac{1}{\sqrt{2}}(E_i(s)\oplus E_i(t)))$ gives 
	\begin{align*}
	\Box_{M\times M}L =& \sum_i \grad^2_{M\times M} L (\frac{1}{\sqrt{2}}(E_i(s)\oplus E_i(t)),\frac{1}{\sqrt{2}}(E_i(s)\oplus E_i(t)))\\
	\leq&\ \frac{1}{2} \sum_i \left(2\Ric(E_i,E_i)|_s^t-\int_s^tH(T,E_i)\right)\\
	\leq&\  R(q)-R(p)-\frac{1}{2}\int_s^{t}\frac{d}{d\bar{\tau}}(R+|T|^2)d\bar{\tau}\\
	=&\ R(q)-R(p)-\frac{1}{2}(R(q)+|T|^2(t))+\frac{1}{2}(R(p)+|T|^2(s))\\
	=&\ \frac{1}{2}R(q)-\frac{1}{2}R(p)+\frac{1}{2}|T|^2(s)-\frac{1}{2}|T|^2(t)\\
	=&\frac{1}{2}\left(\frac{\partial L}{\partial s}+\frac{\D L}{\D t}\right).
	\end{align*}
	This means that $L$ satisfies the differential inequality
	$$(\frac{\D}{\D s}+\frac{\D}{\D t}-2\Box_{M\times M})L\geq 0.$$

	Fixing $(x,y)$ and $(z,w)$ and we will construct barriers there by analogy to \cite{KleinerLott}. Choose some minimizing geodesic $\gamma:(x,y)\to (z,w)$ and let $\epsilon>0$. Then define 
	$$f_\epsilon(((p,s),(q,t)) = L((p,s),(\gamma(y+\epsilon),y+\epsilon))+L((\gamma(y+\epsilon),y+\epsilon),(q,t)).$$
	Then, by uniqueness of geodesics $f_\epsilon((x,y),(z,w))=L((x,y),(z,w))$ and $f_\epsilon\geq L$ near $((x,y),(z,w))$ by the analogue of the triangle inequality. Therefore, by computations similar to the previous
		\begin{align*}
		\frac{\D}{\D s}f_\epsilon((x,y),(z,w))=&\ -R_{g_y}(x)+\left|\frac{d\gamma}{d\tau}\right|^2(y)\\
		\frac{\D}{\D t}f_{\epsilon}((x,y),(z,w))=&\  R_{g_w}(z)-\left|\frac{d \gamma}{d\tau}\right|^2(w).
		\end{align*}
		
	Computing the box operator is a little trickier, since it is not immediately well-defined on the cut-locus. With $\gamma$ constructed as above, we define a new family of operators
	\begin{multline*}
	\Box_{\gamma,\epsilon}f((p,s),(q,t)) = \tr_g\left[ \grad^2_{(p,s)} f \left((\operatorname{Id}_{T_pM}\oplus\parallel_{(p,s)\to (\gamma(y+\epsilon),y+\epsilon)}\circ \parallel_{(\gamma(y+\epsilon),y+\epsilon)\to (q,t)})\bullet,\right.\right.\\\left.\left.(\operatorname{Id}_{T_pM}\oplus\parallel_{(p,s)\to (\gamma(y+\epsilon),y+\epsilon)}\circ \parallel_{(\gamma(y+\epsilon),y+\epsilon)\to (q,t)})\bullet\right)\right]
	\end{multline*}
	where the $\parallel$ operators are defined as usual on the geodesic segments connecting the indicated points. Then, by applying the Hessian estimates, we find
	\begin{align*}
	2\Box_{\gamma,\epsilon}f_\epsilon((x,y),(z,w)) =&\ 2[\Box_{\gamma,\epsilon}L((x,y),(\gamma(y+\epsilon),y+\epsilon))+\Box_{\gamma,\epsilon}L((x,y),(\gamma(y+\epsilon),y+\epsilon))]\\
	\leq&\ \frac{\D f_\epsilon}{\D t}((x,y),(z,w))+\frac{\D f_\epsilon}{\D s}((x,y),(z,w)).
	\end{align*}
	Since this is sufficient to give us a maximum principle on the cut-locus, this proves the inequality in the ``barrier-sense".
	
	Notice that in the above, saturation occurs when all $E_i$'s are Jacobi.
	But when all $E_i$'s are also Jacobi, we find that every field solving $\grad_TV =\Ric(V)^\sharp$ must be Jacobi. So, taking $v=v^ie_i\in T_pM$ and extending by the equation to $V$ along $\gamma$ (a geodesic near $p$)
	$$0=\frac{d|V|^2}{d\tau} = 2\langle \grad_T V,V\rangle - 2\Ric(V,V).$$
	But evaluating at $q$ then gives that
	\begin{align*}
	0=&\ 2\grad_{M\times M}^2L(V(s)\oplus V(t),V(s)\oplus V(t))-2\Ric|_q(V(t),V(t))+2\langle \grad_T V,V\rangle (s)\\
	=&\ 2\grad_{M\times M}^2L(V(s)\oplus V(t),V(s)\oplus V(t))-2\Ric|_q(V(t),V(t))+2\Ric|_p(V(s),V(s)).
	\end{align*}
	for every $v\in T_pM$. Since both of these are symmetric tensors, they are uniquely determined by their actions on the diagonal subspace. So, $$\grad^2_{M\times M} L \circ (\Id \oplus \parallel_\gamma) + \Ric_p \oplus (-\Ric_q)=0.$$
	This is exactly the twisted gradient soliton equation.
	\end{proof}

\begin{proof}[Proof of Corollary \ref{Monot}]
	Notice that, for any $t$, by compactness there is a point $((x,t),(y,t+A))\in M\times\{t\} \times M\times \{t+A\}$ s.t. $\inf_{M\times\{t\}\times M\times \{t+A\}}L = L((x,t),(y,t+A))$. So, by applying the differential inequality at this point,
	$$2\frac{\D}{\D t}L((x,t),(y,t+A))\geq 2\frac{\D}{\D t}L((x,t),(y,t+A))-2\Box L((x,t),(y,t+A))\geq 0,$$
	with the first inequality due to non-negativity of the Hessian at a minimum and the last inequality due to Theorem \ref{MainTheorem}. Thus, the infimum is non-decreasing in $t$. Finally, by the prior theorem, the differential inequality used above is an equality on twisted gradient solitons, giving the result.

\end{proof}

	\begin{proof}[Proof of Theorem \ref{MainTheorem}, Inequality \ref{LEllip}]
		\begin{align*}
		\Box L \leq& \frac{1}{2}R_{g_t}(q)-\frac{1}{2}R_{g_s}(p)+\frac{1}{2}|T|^2(s)-\frac{1}{2}|T|^2(t)\\
		\leq& \frac{1}{2}R_{g_t}(q)-\frac{1}{2}R_{g_s}(p)+\frac{1}{2}|\grad_pL|^2-\frac{1}{2}|\grad_q L|^2(t).
		\end{align*}
			
		A similar computation holds with $f_\epsilon$ as constructed above in the proof of \ref{LEvol}, and so this inequality also holds globally in the barrier sense.
	\end{proof}

\bibliographystyle{acm}

\end{document}